\newtheoremstyle{mythm} 
    {.5em}                    
    {.5em}                    
    {\upshape}                   
    {}                           
    {\bfseries}                   
    {.}                          
    {.5em}                       
    {}  
\theoremstyle{mythm}
\newtheorem{thm}{Theorem}
\newtheorem{lm}[thm]{Lemma}
\newtheorem*{thmA}{Main Theorem}
      \string\usetikzlibrary{decorations.markings} to use arrows with markings}{}}{}%
\newcommand{\R}{\mathbb{R}}
\renewcommand{\S}{\text{S}}
\DeclareMathOperator{\supp}{supp}
\let \H \relax
\DeclareMathOperator{\H}{H}
\DeclareMathOperator{\C}{C}
\tikzset{commutative diagrams/.cd,
comm/.style={start anchor=center,end anchor=center,draw=none}
}
\title{Cup product in bounded cohomology\\of negatively curved manifolds}
\author{Domenico Marasco}
\date{}
\begin{document}
\maketitle
\begin{abstract}
    Let $M$ be a negatively curved compact Riemannian manifold with (possibly empty) convex boundary. Every closed differential $2$-form $\xi\in\Omega^2(M)$ defines a bounded cocycle $c_\xi\in\C_b^2(M)$ by integrating $\xi$ over straightened $2$-simplices. In particular Barge and Ghys proved that, when $M$ is a closed hyperbolic surface, $\Omega^2(M)$ injects this way in $\H_b^2(M)$ as an infinite dimensional subspace. 
    We show that any class of the form $[c_\xi]$, where $\xi$ is an exact differential 2-form, belongs to the radical of the cup product on the graded algebra $\H_b^\bullet(M)$. 
\end{abstract}
\section{Introduction}
Bounded cohomology is a rich research field with various applications, but direct computation of bounded cohomology modules is a hard task. An important case is the free non-abelian group with $n\geq 2$ generators $F_n=F$. The bounded cohomology modules with real coefficients $\H_b^k(F)$ are infinite dimensional when $k=2$ or $k=3$, while it is still not known whether $\H_b^k(F)\neq0$ when $k\geq 4$. All the classes in $\H_b^2(F)$ can notoriously be represented as coboundaries of \textit{quasi-morphisms}.
There are various recent results investigating whether it is possible to construct a non-trivial bounded cocycle of degree $k\geq 4$ as the cup product of non-trivial quasi-morphisms; see \cite{BucherMonod+2018+1157+1162}, \cite{Heuer2020CupPI}, \cite{fournierfacio2020infinite} and \cite{amontova2021trivial}. All these results seem to suggest that $\cup\colon\H_b^2(F)\times\H_b^k(F)\to\H_b^{k+2}(F)$ could be trivial. In particular, in \cite{amontova2021trivial} the authors prove the following:
\begin{thm}
Let $\varphi$ be a $\Delta$-decomposable quasi-morphism and $\alpha\in\H_b^k(F)$, then 
$$[\delta^1\varphi]\cup\alpha=0\in\H_b^{k+2}(F).$$
\end{thm}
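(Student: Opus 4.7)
The plan is to produce, at the cochain level, a bounded $(k{+}1)$-cochain $\beta \in \C_b^{k+1}(F)$ whose coboundary realizes a representative of $[\delta^1\varphi]\cup\alpha$; producing such a primitive forces the cup product class to vanish in $\H_b^{k+2}(F)$.

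I would start by fixing a bounded cocycle $c\in \C_b^k(F)$ representing $\alpha$. Since $\varphi$ is a quasi-morphism, $\delta^1\varphi$ is a bounded $2$-cocycle and $\delta^1\varphi\cup c$ is a bounded representative of the cup product. At the level of all (not necessarily bounded) real cochains one has the tautology $\delta^1\varphi\cup c=\delta(\varphi\cup c)\pm \varphi\cup\delta c=\delta(\varphi\cup c)$, but $\varphi\cup c$ is not bounded because $\varphi$ itself is unbounded. The task is therefore to modify $\varphi\cup c$ by a cocycle so that it becomes a bounded cochain with the same coboundary, i.e.\ to extract the ``bounded part'' of the naive primitive.

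The $\Delta$-decomposition of $\varphi$ is the tool that should make this modification explicit. I would interpret it geometrically on the Bass--Serre tree $T$ of $F$: it ought to provide, up to a bounded error, a decomposition of $\varphi$ as a combination of simpler functions indexed by triangular configurations in $T$. For each elementary piece $\varphi_\Delta$ I would construct a candidate primitive of $\delta\varphi_\Delta\cup c$ by truncating the unbounded product to the subset of $(k{+}1)$-tuples whose tree-configuration is compatible with $\Delta$, so that the truncation is controlled by a constant times $D(\varphi)\,\|c\|_\infty$, with $D$ the defect of $\varphi$. Summing over the $\Delta$-pieces, and checking that only finitely many contribute on each simplex (or that contributions telescope along the tree), yields the global $\beta$.

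The main obstacle is the cochain-level verification that $\delta\beta=\delta^1\varphi\cup c$ holds exactly, not merely up to a bounded error one would then have to absorb recursively. The combinatorial cancellations required for the alternating sum defining $\delta\beta$ to collapse to the cup product are precisely what $\Delta$-decomposability is designed to enforce, and carrying them out explicitly, as a function of the simplex $(g_0,\dots,g_{k+2})$, is the heart of the argument. A secondary technical point is choosing a convenient cocycle representative of $\alpha$ (for instance alternating, or invariant under a chosen orientation of tree edges), so that the tree-geometric construction of $\beta$ is well defined and the coboundary identity can be checked term by term.
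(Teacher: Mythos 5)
First, a point of orientation: the paper does not actually prove this statement — Theorem 1 is quoted from \cite{amontova2021trivial}, and the body of the paper proves the analogous statement for negatively curved manifolds. Your opening reduction is correct and is exactly the shared skeleton of both arguments: since $\delta c=0$ one has $\delta(\varphi\cup c)=\delta^1\varphi\cup c$, so $\varphi\cup c$ is an unbounded primitive of the cup product, and it suffices to produce $\eta\in\C^k(F)$ such that $\varphi\cup c+\delta\eta$ is bounded.

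Everything after that reduction, however, is a restatement of the problem rather than a proof. You never say what a $\Delta$-decomposable quasi-morphism is, so the ``elementary pieces $\varphi_\Delta$'' and the ``truncation to tree-compatible configurations'' are not defined objects, and the two claims that carry all the weight — that the truncated products are bounded by a constant times $D(\varphi)\,\|c\|_\infty$, and that their coboundaries sum \emph{exactly} to $\delta^1\varphi\cup c$ — are asserted rather than proved; you yourself flag the second as ``the heart of the argument'' and leave it undone. You also relegate the choice of representative of $\alpha$ to ``a secondary technical point,'' when it is in fact the other essential ingredient: in \cite{amontova2021trivial} one first replaces the representative of $\alpha$ by a special (aligned) cocycle — exactly as this paper replaces $\omega$ by the smooth representative $\widetilde{\theta}_b^k(\omega)$ — and the correction term $\eta$ is built by evaluating that representative with a vertex coned off, the cocycle relation $\delta\omega=0$ being what makes the alternating sum in $\delta\eta$ collapse against $\varphi\cup\omega$ (compare Lemma 4 and the telescoping computation in Section 3 of this paper). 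Without an explicit decomposition, an explicit special representative, and an explicit formula for $\eta$, the proposal identifies the right target but does not yet contain a proof.
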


The main result of this paper has a similar flavour, but in a different context.
Let $M$ be a negatively curved compact Riemannian manifold with (possibly empty) convex boundary. Every differential $k$-form $\psi\in\Omega^k(M)$ defines a singular $k$-cochain $c_\psi\in\C^k(M)$ by integrating $\psi$ over straightened simplices. 
As we will see in Section 2.2, $c_\psi$ is bounded when $k\geq2$. Moreover, for every $\varphi\in\Omega^1(M)$ we have $\delta^1c_\varphi=c_{d\varphi}$, hence $c_\varphi$ is a \textit{quasi-cocycle} $c_\varphi$, i.e. a cochain with bounded differential. Degree one quasi-cocycles play in singular cohomology the very same role of quasi-morphisms in group cohomology. If we denote by $\text{E}\Omega^2(M)\subset\Omega^2(M)$ the space of exact forms we will show the following:
\begin{thmA}
Let $\xi\in\text{E}\Omega^2(M)$ and $\alpha\in\H_b^k(M)$, then
$$[c_{\xi}]\cup\alpha=0\in\H_b^{k+2}(M).$$
\end{thmA}
This is particularly interesting when $M=\Sigma$, a closed hyperbolic surface. In this case all the quasi-cocycles defined by non-trivial exact forms are non-trivial and thus $\text{E}\Omega^2(\Sigma)$ is an infinite dimensional subspace of $\H_b^2(\Sigma)$. This is true thanks to Theorem 3.2 of \cite{Barge1988}:
\begin{thm}
The map $\Omega^2(\Sigma)\to\H_b^2(\Sigma)$ that sends $\psi$ to $[c_\psi]$
is injective.
\end{thm}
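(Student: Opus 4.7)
The plan is to prove the contrapositive: if $[c_\psi] = 0$ in $\H_b^2(\Sigma)$, then $\psi = 0$. The first step is a reduction to the exact case: the comparison map $\H_b^2(\Sigma) \to \H^2(\Sigma) \cong \R$ sends $[c_\psi]$ to $\int_\Sigma \psi$, so $[c_\psi] = 0$ forces $\int_\Sigma \psi = 0$ and hence $\psi = d\omega$ for some $\omega \in \Omega^1(\Sigma)$.

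The second step extracts a uniform geometric estimate on the universal cover $\mathbb{H}^2$, on which $\Gamma := \pi_1(\Sigma)$ acts by isometries. The lift $\tilde\omega$ is $\Gamma$-invariant, and by Stokes $c_\psi(\sigma) = \delta u(\sigma)$ on straight simplices, where $u(x,y) := \int_{[x,y]}\tilde\omega$. If $c_\psi = \delta b$ for a bounded $\Gamma$-invariant $1$-cochain $b$, then $u - b$ is a $1$-cocycle on the contractible $\mathbb{H}^2$, hence $u - b = \delta h$ for some $h\colon\mathbb{H}^2\to\R$. $\Gamma$-invariance of $u$ and $b$ forces $h\circ\gamma - h$ to be a constant $\chi(\gamma)$, and $\chi\colon\Gamma \to \R$ is a homomorphism, i.e.\ an element of $\H^1(\Sigma) \cong \Hom(\Gamma,\R)$. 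Choose a smooth closed $1$-form $\omega'$ representing $\chi$; its $\Gamma$-equivariant primitive $h_{\omega'}$ on $\mathbb{H}^2$ satisfies the same cocycle relation as $h$, so $h - h_{\omega'}$ is $\Gamma$-invariant and descends to $\bar h\colon\Sigma\to\R$. Setting $\eta := \omega - \omega'$, the identity $u = \delta h + b$ becomes
\[
\int_{[x,y]}\tilde\eta \;=\; \bar h(\pi y) - \bar h(\pi x) + b(x,y),
\]
where $\pi\colon\mathbb{H}^2 \to \Sigma$ is the covering map. A chain-of-neighborhoods argument, using the boundedness of $\eta$ on compact $\Sigma$ and of $b$, shows that $\bar h$ is itself bounded, which yields the crucial uniform estimate
\[
\sup_{x,y\in\mathbb{H}^2}\left|\int_{[x,y]}\tilde\eta\right| < \infty.
\]

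The third step converts this bound into exactness of $\eta$, and I expect it to be the main obstacle. Viewing $\tilde\eta$ as a fiberwise-linear function on the unit tangent bundle $T^1\Sigma$, the estimate says that its integrals along every orbit of the Anosov geodesic flow $\phi_t$ are uniformly bounded. The uniform Livsic theorem then produces a bounded H\"older primitive $F\colon T^1\Sigma\to\R$ with $\tilde\eta(v) = \tfrac{d}{dt}|_{t=0} F(\phi_t v)$. Since $\tilde\eta$ is odd under the involution $v\mapsto -v$ while the flow is reversed, one checks $F$ is even on each fiber; combined with a fiberwise Fourier analysis on the circle bundle $T^1\Sigma\to\Sigma$ (exploiting that $\tilde\eta$ lives in the $k=\pm 1$ harmonics and the explicit form of the geodesic spray in curvature $-1$), this forces $F$ to descend to a smooth $f\colon\Sigma\to\R$ with $\eta = df$. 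Consequently $\psi = d\eta + d\omega' = 0$. It is precisely in this dynamical-to-topological descent that the hyperbolicity of $\Sigma$ is essential: weaker topological arguments based only on closed-geodesic integrals of $\eta$ do not seem to close the loop.
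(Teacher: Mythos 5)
First, a framing remark: the paper does not prove this statement at all --- it is quoted from Barge--Ghys (Theorem 3.2 of \cite{Barge1988}) and used as a black box --- so there is no internal proof to compare against, and your proposal has to stand on its own. Your Steps 1 and 2 do stand: the comparison map sends $[c_\psi]$ to the de Rham class of $\psi$, so $[c_\psi]=0$ forces $\psi=d\omega$; and the correction of $\omega$ by a closed form $\omega'$ realizing the period homomorphism $\chi$, followed by the bounded-oscillation argument for $\bar h$ on the compact surface, correctly produces the uniform bound $\sup_{x,y}\bigl|\int_{[x,y]}\widetilde{\eta}\bigr|<\infty$ with $d\eta=\psi$. (Two cosmetic points: $b$ is a singular cochain, so its value on the straight segment should be written $b([x,y])$; and $h$ is a priori only a set-theoretic $0$-cochain, which is harmless for the oscillation argument.)

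The genuine gap is in Step 3, exactly where you predicted. What you need is the solenoidal injectivity of the geodesic X-ray transform on $1$-tensors over a closed hyperbolic surface: a smooth $1$-form, not assumed closed, whose integrals over all geodesic arcs are uniformly bounded (hence whose integrals over closed geodesics vanish, by iterating) must be $df$. This is a true theorem (Guillemin--Kazhdan; Dairbekov--Sharafutdinov; Paternain--Salo--Uhlmann in the Anosov setting), but your sketch omits both of its hard ingredients. First, Liv\v{s}ic only gives a H\"older solution $F$ of the cohomological equation along the flow; the fiberwise Fourier decomposition and the raising/lowering operators cannot be applied to a merely H\"older function, so you need the smooth Liv\v{s}ic theorem of de la Llave--Marco--Moriy\'on (or a Pestov-identity argument with distributional solutions) to upgrade $F$ to $C^\infty$. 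Second, evenness of $F$ under the flip $v\mapsto -v$ only kills the odd fiberwise modes; eliminating the even modes of degree at least $2$ is precisely the Guillemin--Kazhdan energy estimate in negative curvature, which you assert but do not supply, and which is where hyperbolicity is genuinely used. If you are allowed to cite tensor tomography on negatively curved surfaces as known, your argument closes and is a legitimate (and genuinely different) route to Barge--Ghys's injectivity; as a self-contained proof it is incomplete at exactly this point.
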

\section*{Acknowledgements}
I want to thank Sofia Amantova, Francesco Fournier-Facio and Marco Moraschini for the useful dicussions and interesting comments. I also want to thank my Ph.D. supervisor Roberto Frigerio who has suggested the topic of this paper.
\section{Preliminaries}
\subsection{Bounded cohomology and differential forms}
Let $X$ be a topological space, we denote by $(\C^{\bullet}(X),\delta^{\bullet})$ its singular cochain complex with real coefficients and by $\H^{\bullet}(X)$ its singular cohomology with real coefficients.
Let $\S_k(X)=\{s\colon\Delta^k\to X\}$ be the set of $k$-singular simplices of $X$ and
define an $\ell^\infty$ norm on $\C^k(X)$ by setting, for every $\omega\in\C^k(X)$, $$\|\omega\|_\infty=\sup\left\{|\omega(s)|\;\big|\;s\in\S_k(X)\right\}.$$ 
The subspaces of bounded $k$-cochains $$\C^k_b(X)=\left\{\omega\in\C^k(X)\;\big|\;\|\omega\|_\infty<\infty\right\}$$
form a subcomplex $\C^{\bullet}_b(X)\subset\C^{\bullet}(X)$, whose homology will be denoted by $\H^{\bullet}_b(X)$. The $\ell^\infty$ norm descends to a seminorm on $\H^{\bullet}(X)$ and $\H_b^{\bullet}(X)$ by defining the seminorm of a class as the infimum of the norms of its representatives. The inclusion $\C^{\bullet}_b(X)\hookrightarrow\C^{\bullet}(X)$ induces a map
$$
c^{\bullet}\colon\H_b^{\bullet}(X)\to\H^{\bullet}(X)
$$
called the \emph{comparison map}. The kernel of $c^k$ is denoted by $\text{EH}_b^k(X)$ and called the \textit{exact bounded cohomology} of $X$.\\

Now let $X$ be a Riemannian manifold, we denote by $\Omega^k(X)$ the space of smooth $k$-forms on $X$ and by $d\colon\Omega^k(X)\to\Omega^{k+1}(X)$ the usual differential. The subspaces of closed and exact $k$-forms will be denoted by $\text{C}\Omega^k(X)$ and $\text{E}\Omega^k(X)$, respectively. We denote the De Rham cohomology of $X$ by $$\H_{dR}^\bullet(X)=\frac{\text{C}\Omega^\bullet(X)}{\text{E}\Omega^\bullet(X)}.$$
For every $\psi\in\Omega^k(X)$ and $x\in X$ set 
$$
\|\psi_x\|_\infty=
\sup\left\{|\psi_x(\underline{v})|\;\big|\;\underline{v}\in T_xX\text{ is a $k$-orthonormal frame}\right\}
$$
so that we can define an $\ell^\infty$ norm on $\Omega^k(X)$ as follows:
$$
\|\psi\|_\infty=\sup_{x\in X}\{\|\psi_x\|_\infty\}\in[0,+\infty].
$$
Of course, if $X$ is compact, then $\|\psi\|_\infty<\infty$ for every $\psi\in\Omega^\bullet(X)$.
Observe that for any $k$-dimensional immersed submanifold $D\hookrightarrow X$ we have that
$$
\left|\int_D\psi\right|\leq\int_D\|\psi\|_\infty d\text{Vol}=\text{Vol}_X(D)\cdot\|\psi\|_\infty.
$$
\subsection{Negatively curved manifolds and 2-forms}
Throughout the whole paper, let $M$ be a negatively curved orientable compact Riemannian manifold with (possibly empty) convex boundary. The universal covering $\widetilde{M}$ is continuously uniquely geodesic and thus for every $(x_0,\dots,x_k)\in\widetilde{M}^{k+1}$, by repeatedly coning on the $x_i$ one can define the straight $k$-simplex $[x_0,\dots,x_k]\in\S_k(\widetilde{M})$ as constructed in Section 8.4 of \cite{frigerio2017bounded}. 
The fundamental group $\pi_1(M)=\Gamma$ acts on the universal covering $\widetilde{M}$ via deck transformations and this defines in turn an action of $\Gamma$ on $C^{\bullet}_b(\widetilde{M})$. We denote by $\C_b^\bullet(\widetilde{M})^\Gamma$ the subcomplex of $\Gamma$-invariant cochains.
The covering map $p\colon\widetilde{M}\to M$ induces an isometric isomorphism of normed complexes $\C_b^{\bullet}(M)\xrightarrow[]{\cong}\C_b^{\bullet}(\widetilde{M})^{\Gamma}$.
Similarly, $\Gamma$ acts on $\Omega^k(\widetilde{M})$, we denote by $\Omega^k(\widetilde{M})^\Gamma$ the space of $\Gamma$-invariant $k$-forms of $\widetilde{M}$. By pulling-back via the covering projection we get the identification $\Omega^k(M)\xrightarrow[]{\cong}\Omega^k(\widetilde{M})^\Gamma$.\\

For any $\psi\in\Omega^k(\widetilde{M})^\Gamma$, we define a cochain $c_\psi\in\C^k(\widetilde{M})^{\Gamma}$ by setting for every $s\in\S_k(\widetilde{M})$,
$$
c_\psi(s)=\int_{[s(e_0),\dots,s(e_k)]}\psi.
$$
where $e_0,\dots,e_k$ are the vertices of the standard simplex $\Delta^k$.

Applying Stoke's Theorem we see that for every $s\in\S_{k+1}(\widetilde{M})$,
$$
(\delta^k c_\psi)(s)=c_\psi(\partial_{k+1}s)=\int_{\partial_{k+1}[s(e_0),\dots,s(e_{k+1})]}\psi=\int_{[s(e_0),\dots,s(e_{k+1})]}d\psi=c_{d\psi}(s)
$$
and thus mapping $\psi$ to $c_\psi$ defines a morphism of cochain complexes $I^{\bullet}\colon\Omega^{\bullet}(\widetilde{M})^\Gamma\to\C^{\bullet}(\widetilde{M})^\Gamma$. Furthermore, the fact that the \emph{straightening operator} $s\mapsto [s(e_0),\dots,s(e_k)]$ is $\Gamma$-equivariantly homotopic to the identity of $\C^k(\widetilde{M})$ (see e.g. \cite{frigerio2017bounded} Proposition 8.11) implies that the map induced by $I^\bullet$ on cohomology corresponds to the \emph{De Rham isomorphism} $\H_{dR}^{\bullet}(M)\xrightarrow{\cong}\H^\bullet(M)$ defined e.g. in Chapter 18 of \cite{lee2018introduction}.\\

Since the action of $\Gamma$ is cocompact we have $\|\psi\|_\infty<\infty$, for every $\psi\in\Omega^k(\widetilde{M})^\Gamma$. Furthermore, as shown in the second section of \cite{INOUE198283}, when $k\geq 2$ the volume of $[x_0,\dots,x_k]$ is bounded by a constant $V_k$ that depends only on $k$ and an upper bound of the curvature of $M$. This means that for every $s\in\S_k(\widetilde{M})$,
$$
|c_\psi(s)|=\left|\int_{[s(e_0),\dots,s(e_k)]}\psi\right|<V_k\cdot\|\psi\|_\infty
$$
and thus $c_\psi\in\C_b^k(\widetilde{M})^\Gamma$ is a bounded cochain.\\

We have a well defined map for $k\geq2$:
\begin{align*}
    I_b^k\colon\text{C}\Omega^k(M)&\to\H_b^k(M)\\
    \psi&\mapsto[c_\psi].
\end{align*}
Interestingly, since $I^\bullet$ induces the De Rham isomorphism we have the following commutative diagram:
\begin{center}
\begin{tikzcd}
\text{C}\Omega^k(M) \arrow[r, two heads] \arrow[d, "I_b^k"] & \H_{dR}^k(M) \arrow[d, "\cong"] \\
\H_b^k(M) \arrow[r, "c^k"]                                  & \H^k(M)                        
\end{tikzcd}
\end{center}
showing that the comparison map $c^k$ is surjective for $k\geq2$ (this is true in the much more general context of aspherical manifolds with Gromov hyperbolic fundamental group, see \cite{Mineyev2001}).

Furthermore, when $k>2$, for any $d\varphi\in\text{E}\Omega^k(M)$, 
$$I_b^k(d   \varphi)=[c_{d\phi}]=[\delta^{k-1}c_\varphi]=0\in\H_b^k(M),$$
meaning that the restriction $I_b^k\colon\text{E}\Omega^k(M)\to\H_b^k(M)$ is the zero map. This implies that $I_b^k$ descends on the quotient $\text{C}\Omega^k(M)/\text{E}\Omega^k(M)=\H_{dR}^k(M)$ to a map $\hat{I}_b^k\colon\H_{dR}^k(M)\to\H_b^k(M)$. We now have the following commutative diagram:
\begin{center}
\begin{tikzcd}
                           & \H_{dR}^k(M) \arrow[d, "\cong"] \arrow[ld, "\hat{I}_b^k"'] \\
\H_b^k(M) \arrow[r, "c^k"] & \H^k(M).                                                   
\end{tikzcd}
\end{center}
Therefore, up to the identification $\H_{dR}^k(M)\cong\H^k(M)$, for $k>2$ the map $\hat{I}_b^k$ provides a right inverse of the comparison map. On the one hand, this raises the interesting question of understanding the possible geometric properties of the elements in the image of $\hat{I}_b^k$; on the other hand, for $k>2$ differential forms produce only a finite dimensional subsbace of $\H_b^k(M)$.

On the contrary, in degree $2$, for every $\xi=d\varphi\in\text{E}\Omega^2(M)$, the primitive $c_\varphi\in\C^1(M)$ of $c_\xi$ is not necessarily bounded since the length of geodesic segments in $M$ is arbitrarily big and thus $[c_\xi]\in\text{EH}_b^2(M)$ may be non-trivial. In particular, when $M=\Sigma$, a closed hyperbolic surface, thanks to Theorem 2 $[c_\xi]$ is never trivial if $\xi\neq0$, and the space of exact forms $\text{E}\Omega^2(\Sigma)$ defines a infinite dimensional subspace of $\text{EH}_b^2(\Sigma)$:
\begin{center}
\begin{tikzcd}
\text{E}\Omega^2(\Sigma) \arrow[r, hook] \arrow[d, "I_b^2", hook] & \Omega^2(\Sigma) \arrow[r, two heads] \arrow[d, "I_b^2", hook] & \H_{dR}^2(\Sigma) \arrow[d, "\cong"] \\
\text{EH}_b^2(\Sigma) \arrow[r, hook]                             & \H_b^2(\Sigma) \arrow[r, two heads]                                    & \H^2(\Sigma).                        
\end{tikzcd}
\end{center}
\subsection{Smooth cohomology}
In this section we show that every class $\alpha\in\H_b^k(M)$ admits a representative that smoothly depends on the vertices of simplices. Moreover, in Lemma 3 we show an additional property of this representative that we will use in the next section.\\

Let $X$ be a topological space, we endow the set of singular $k$-simplices $\S_k(X)$ with the compact-open topology to define the subcomplex of the continuous cochains of $X$
$$
\C^k_{c}(X)=\{\omega\in\C^k(X)\;\big|\;\omega_{|\S_k(X)}\;\text{is continuous}\}.
$$
Moreover, we set $\C^k_{c,b}(X)=\C^k_{c}(X)\cap\C^k_b(X)$ and denote the homology of these complexes by $\H_c^{\bullet}(X)$ and $\H_{c,b}^{\bullet}(X)$, respectively.\\

Theorem 1.4 of \cite{Frigerio2011BoundedCC} states that if $X$ is path connected, paracompact and with contractible universal covering $\widetilde{X}$, then the inclusion of bounded continuous cochains in classical cochains
$$
i_{b}^{\bullet}\colon\C^{\bullet}_{c,b}(X)\to\C_{b}^{\bullet}(X)
$$
induces isometric isomorphisms on cohomology
\begin{align*}
i_{b}^{\bullet}&\colon\H^{\bullet}_{c,b}(X)\to\H_{b}^{\bullet}(X).
\end{align*}
Furthermore, there is an explicit formula for the inverse of these isomorphisms
\begin{align*}
\theta_{b}^{\bullet}=(i_{b}^{\bullet})^{-1}&\colon\H_{b}^{\bullet}(X)\to\H_{c,b}^{\bullet}(X).
\end{align*}

In what follows we will give the explicit formula of $\theta_b^\bullet$ in the case $X=M$. It is shown in Lemma 6.1 of \cite{Frigerio2011BoundedCC} that the isometric isomorphism $\C_{b}^{\bullet}(M)\cong\C_{b}^{\bullet}(\widetilde{M})^{\Gamma}$ induced by $p\colon\widetilde{M}\to M$ can be restricted to
$$
p_{c,b}^{\bullet}\colon\C_{c,b}^{\bullet}(M)\to\C_{c,b}^{\bullet}(\widetilde{M})^{\Gamma}.
$$
With the identifications $\C_{b}^{\bullet}(M)\cong\C_{b}^{\bullet}(\widetilde{M})^{\Gamma}$ and $\C_{c,b}^{\bullet}(M)\cong\C_{c,b}^{\bullet}(\widetilde{M})^{\Gamma}$ in mind, we will write out the explicit formula for the map
$$
\widetilde{\theta}_b^k\colon\C_b^k(\widetilde{M})^{\Gamma}\to\C_{b,c}^k(\widetilde{M})^{\Gamma}
$$
which induces the map $\theta_b^k$ on cohomology.
Since $M$ is compact, we can slightly modify the construction in Lemma 5.1 of \cite{Frigerio2011BoundedCC}, by
using a smooth partition of unity subordinate to a \textit{finite} open cover of $M$ and get a
smooth map $h_{\widetilde{M}}\colon\widetilde{M}\to[0,1]$ with the following properties:
\begin{itemize}
    \item[(i)] There is an $N\in\mathbb{N}$, such that for every $x\in\widetilde{M}$ there is a neighbourhood $W_x$ of $x$ such that the set $\{\gamma\in\Gamma\;|\;\gamma(W_x)\cap\supp (h_{\widetilde{M}})\}$ has at most $N$ elements.
    \item[(ii)]For every $x\in\widetilde{M}$, we have $\sum_{\gamma\in\Gamma} h_{\widetilde{M}}(\gamma x)=1.$
    \item[(iii)] $\supp h_{\widetilde{M}}$ is compact.
\end{itemize}
Let $\omega\in\C_b^k(\widetilde{M})^\Gamma$ and pick a basepoint $z\in\widetilde{M}$. We define the function $f_\omega\colon\widetilde{M}^{k+1}\to\mathbb{R}$ as follows:
$$
f_\omega(x_0,\dots,x_k)=\sum_{(\gamma_0,\dots,\gamma_k)\in(\Gamma)^{k+1}}
h_{\widetilde{M}}(\gamma_0^{-1}x_0)\cdot\ldots\cdot h_{\widetilde{M}}(\gamma_k^{-1}x_k)\cdot\omega([\gamma_0z,\dots,\gamma_kz]).
$$
Notice that the sum above is finite because of property (i). Finally we can define 
$$\widetilde{\theta}^k_{b}(\omega)(s)=f_\omega(s(e_0),\dots,s(e_k)).$$ Observe that $\widetilde{\theta}^k_{b}(\omega)$ is a $\Gamma$-invariant cocycle because $f_\omega$ is a $\Gamma$-invariant function, where $\Gamma$ acts on $\widetilde{M}^{k+1}$ diagonally.\\

In order to prove the Main Theorem we will need the following:
\begin{lm}
Let $\omega\in\C_b^k(\widetilde{M})^{\Gamma}$ and let $(x_1,\dots,x_k)\in(\widetilde{M})^k$.
Then the function
$$
f_\omega(-,x_1,\dots,x_k)\colon\widetilde{M}\to\R
$$
is smooth and the norm of its differential $df_\omega(-,x_1,\dots,x_k)\in\Omega^1(\widetilde{M})$ is bounded by a constant that does not depend on $(x_1,\dots,x_k)$.
\end{lm}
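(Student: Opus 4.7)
The plan is to exploit the telescoping structure of $f_\omega$ by separating the dependence on the first variable from everything else, so that the sum over $\gamma_0$ becomes a locally finite linear combination of translates of $h_{\widetilde{M}}$ with uniformly bounded scalar coefficients.

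Concretely, I would first fix $(x_1, \dots, x_k)\in\widetilde{M}^k$ and rewrite
\[
f_\omega(x_0, x_1, \dots, x_k) \;=\; \sum_{\gamma_0\in\Gamma} h_{\widetilde{M}}(\gamma_0^{-1}x_0)\cdot G(\gamma_0),
\]
where the scalar
\[
G(\gamma_0) \;=\; \sum_{(\gamma_1,\dots,\gamma_k)\in\Gamma^k} h_{\widetilde{M}}(\gamma_1^{-1}x_1)\cdots h_{\widetilde{M}}(\gamma_k^{-1}x_k)\cdot \omega([\gamma_0 z,\gamma_1 z,\dots,\gamma_k z])
\]
depends only on $\gamma_0$ and the fixed data $(x_1,\dots,x_k)$. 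The first key observation is that $|G(\gamma_0)|\le \|\omega\|_\infty$ for every $\gamma_0$: indeed, bounding $|\omega([\gamma_0 z,\ldots,\gamma_k z])|$ by $\|\omega\|_\infty$ and iteratively applying property (ii) of $h_{\widetilde{M}}$ to collapse each sum over $\gamma_i$ (for $i\ge 1$) yields $1$.

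Next, I would use property (i) to show smoothness: for any $x_0\in\widetilde{M}$ there is a neighborhood $W_{x_0}$ and at most $N$ group elements $\gamma_0$ for which the function $y\mapsto h_{\widetilde{M}}(\gamma_0^{-1}y)$ is not identically zero on $W_{x_0}$. Hence on $W_{x_0}$, the expression $f_\omega(-,x_1,\dots,x_k)$ is a finite $\R$-linear combination of the smooth functions $y\mapsto h_{\widetilde{M}}(\gamma_0^{-1}y)$, so it is smooth there; since $x_0$ is arbitrary, smoothness follows on all of $\widetilde{M}$.

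For the differential bound, I would differentiate term-by-term on $W_{x_0}$: since $\Gamma$ acts by isometries, the chain rule gives
\[
\bigl|d_{x_0}\bigl(h_{\widetilde{M}}\circ\gamma_0^{-1}\bigr)(v)\bigr| \;=\; \bigl|dh_{\widetilde{M}}|_{\gamma_0^{-1}x_0}(d\gamma_0^{-1}(v))\bigr| \;\le\; \|dh_{\widetilde{M}}\|_\infty\cdot\|v\|.
\]
The quantity $\|dh_{\widetilde{M}}\|_\infty$ is finite by property (iii), i.e. because $\supp h_{\widetilde{M}}$ is compact, call it $K$. Combining this with the uniform bound $|G(\gamma_0)|\le\|\omega\|_\infty$ and the fact that at most $N$ terms contribute at any point, we get
\[
\bigl\|d f_\omega(-,x_1,\dots,x_k)\bigr\|_\infty \;\le\; N\cdot K\cdot \|\omega\|_\infty,
\]
which is visibly independent of $(x_1,\dots,x_k)$. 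The whole argument is essentially routine; the only point requiring a moment of care is recognizing that the uniform bound on $G(\gamma_0)$ follows from the partition-of-unity property (ii), since it is this bound that removes any dependence on the fixed vertices $x_1,\dots,x_k$ from the final estimate.
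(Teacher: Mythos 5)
Your proof is correct and follows essentially the same route as the paper's: differentiate the defining sum term by term in the first variable, use property (i) for local finiteness (hence smoothness and the count of nonvanishing summands), and get $\|dh_{\widetilde{M}}\|_\infty<\infty$ from the compact support. The only difference is minor but in your favour: by collapsing the inner sums with property (ii) you obtain the constant $N\cdot\|dh_{\widetilde{M}}\|_\infty\cdot\|\omega\|_\infty$, whereas the paper simply counts at most $N^{k+1}$ nonzero summands and gets $N^{k+1}\cdot\|dh_{\widetilde{M}}\|_\infty\cdot\|\omega\|_\infty$; both are independent of $(x_1,\dots,x_k)$, which is all the lemma requires.
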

\begin{proof}
It is clear by construction that $f_\omega(-,x_1,\dots,x_k)$ is smooth. Moreover, expanding its differential
$$
df_\omega(-,x_1,\dots,x_k)=\sum_{(\gamma_0,\dots,\gamma_k)\in(\Gamma)^{k+1}}
dh_{\widetilde{M}}(\gamma_0^{-1}-)\cdot\ldots\cdot h_{\widetilde{M}}(\gamma_k^{-1}x_k)\cdot\omega([\gamma_0z,\dots,\gamma_kz])
$$
we see that, by property (i) of $h_{\widetilde{M}}$, there are at most $N^{k+1}$ non-zero summands and thus 
$$\|df_\omega(-,x_1,\dots,x_k)\|\leq N^{k+1}\cdot\|dh_{\widetilde{M}}\|_\infty\cdot\|\omega\|_\infty.$$
We can conclude since $\|\omega\|_\infty<\infty$ by assumption and $\|dh_{\widetilde{M}}\|_\infty<\infty$ because $h_{\widetilde{M}}$ has compact support.
\end{proof}
\section{Proof of the Main Theorem}
Let $\varphi\in\Omega^1(\widetilde{M})^\Gamma$ and $[\omega]\in\H_b^k(M)$, we look for a bounded primitive of $c_{d\varphi}\cup\omega\in\C_b^{k+2}(\widetilde{M})^\Gamma$.
Observe that $c_\varphi\cup\omega\in\C^{k+1}_b(\widetilde{M})^{\Gamma}$ is a (not necessarily bounded) primitive, in fact
$$
\delta^{k+1}(c_\varphi\cup\omega)=\delta^1(c_\varphi)\cup\omega=c_{d\varphi}\cup\omega.
$$
Of course, it is sufficient to find an $\eta\in\C^k(\widetilde{M})^\Gamma$ such that $c_\varphi\cup\omega+\delta^k\eta\in\C_b^{k+1}(\widetilde{M})^\Gamma$ is bounded.

We first replace $\omega$ with $\widetilde{\theta}^{k}_{b}(\omega)$, this can be done without loss of generality because as shown in the previous section the map $\widetilde{\theta}_b^\bullet$ induces an isomorphism on bounded cohomology.
Under this assumption we have that $\omega(s)=f_\omega(s(e_0),\dots,s(e_k))$ for every $s\in\S_k(\widetilde{M})$. Thus $(c_\varphi\cup\omega)(s)$ only depends on the vertices of $s\in\S_{k+1}(\widetilde{M})$, in fact
\begin{align*}
(c_\varphi\cup\omega)(s)&=c_\varphi([s(e_0),s(e_1)])\cdot\omega([s(e_1),\dots,s(e_{k+1})])\\
&=\int_{[s(e_0),s(e_1)]}\varphi\cdot f_\omega(s(e_1),\dots,s(e_{k+1})).
\end{align*}
Next, we define the function $\zeta\colon\widetilde{M}^{k+2}\to\mathbb{R}$ as follows
$$
\zeta(x_0,\dots,x_{k+1})=\int_{[x_0,x_1]}\varphi\cdot f_\omega(-,x_2,\dots,x_{k+1})
$$
where we see $f_\omega(-,x_2,\dots,x_{k+1})$ as a $0$-form (i.e. a smooth function). We observe that $\zeta$ is a $\Gamma$-invariant function (again using the diagonal action of $\Gamma$ on $\widetilde{M}^{k+2}$), in fact $\varphi$ and $f_\omega$ are $\Gamma$-invariant and for any $\gamma\in\Gamma$ we have that $[\gamma x_0,\gamma x_1]=\gamma[x_0,x_1]$.
\begin{lm}
For every $(x_0,\dots,x_{k+1})\in\widetilde{M}^{k+2}$ we have that
$$
(c_\varphi\cup\omega)([x_1,\dots,x_{k+1}])=\zeta(x_0,\dots,x_{k+1})-\sum_{i=2}^{k+1}(-1)^i\zeta(x_0,x_1,x_1,x_2,\dots,\hat{x_i},\dots,x_{k+1}).
$$
\end{lm}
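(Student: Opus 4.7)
Since $\omega$ has been replaced (without loss of generality) by its continuous representative $\widetilde{\theta}_b^k(\omega)$, the value $\omega(s)$ is completely determined by $f_\omega$ on the ordered vertices of $s$. The cocycle condition $\delta^k\omega=0$, applied to the straight $(k+1)$-simplex $[y_0,\dots,y_{k+1}]$, therefore reduces to the purely combinatorial relation
$$
\sum_{j=0}^{k+1}(-1)^j f_\omega(y_0,\dots,\hat{y}_j,\dots,y_{k+1})=0
$$
valid for every $(y_0,\dots,y_{k+1})\in\widetilde{M}^{k+2}$. This is the sole input I need.

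First, I specialise the identity to the tuple $(y,x_1,x_2,\dots,x_{k+1})$, regarding $y\in\widetilde{M}$ as a free variable and keeping $x_1,\dots,x_{k+1}$ fixed. Isolating the $j=0$ summand yields the pointwise identity
$$
f_\omega(x_1,\dots,x_{k+1}) = f_\omega(y,x_2,\dots,x_{k+1}) - \sum_{i=2}^{k+1}(-1)^i f_\omega(y,x_1,\dots,\hat{x}_i,\dots,x_{k+1}),
$$
whose left-hand side is constant in $y$ and whose right-hand side is smooth in $y$ (each summand being smooth by Lemma 3).

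Then I multiply both sides by the $1$-form $\varphi$ (in the variable $y$) and integrate over the geodesic segment $[x_0,x_1]$. The left-hand side becomes $f_\omega(x_1,\dots,x_{k+1})\cdot c_\varphi([x_0,x_1])$, which by the cup product formula displayed just before the lemma equals $(c_\varphi\cup\omega)$ evaluated on the $(k+1)$-simplex with vertices $(x_0,x_1,\dots,x_{k+1})$, i.e.\ the left-hand side of the statement. On the right-hand side, reading each integral through the definition of $\zeta$---with the integration endpoints $x_0,x_1$ filling the first two slots of $\zeta$ and the non-$y$ arguments of $f_\omega$ filling the remaining ones---the first term becomes $\zeta(x_0,x_1,\dots,x_{k+1})$, while the $i$-th summand becomes $\zeta(x_0,x_1,x_1,x_2,\dots,\hat{x}_i,\dots,x_{k+1})$, reproducing exactly the alternating sum in the statement.

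I do not expect any serious obstacle. The proof is essentially formal: it uses only the cocycle property of $\omega$ and the vertex-only nature of $\widetilde{\theta}_b^k(\omega)$, with no direct input from curvature or compactness of $M$ beyond what has already been invoked in constructing $\widetilde{\theta}_b^k$. The only delicate point is index bookkeeping; in particular, the doubled occurrence of $x_1$ on the right-hand side arises because the integration over $[x_0,x_1]$ fills the first two slots of $\zeta$ with $x_0,x_1$, while the choice $y_1=x_1$ made in the cocycle relation then supplies another $x_1$ in the immediately following slot.
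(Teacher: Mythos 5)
Your proof is correct and is essentially the paper's own argument: apply the cocycle identity $\delta^k\omega=0$ to the straight simplex on $(y,x_1,\dots,x_{k+1})$, solve for $f_\omega(x_1,\dots,x_{k+1})$ as a function of the free variable $y$, multiply by $\varphi$, and integrate over $[x_0,x_1]$ to read off the $\zeta$-terms. You also correctly interpret the left-hand side as $(c_\varphi\cup\omega)$ evaluated on the simplex with vertices $x_0,\dots,x_{k+1}$ (the statement's notation drops the $x_0$), matching the cup-product formula displayed before the lemma.
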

\begin{proof}
Since $\omega$ is a cocycle we have that for any $z\in\widetilde{M}$,
\begin{align*}
0&=\delta^k\omega([z,x_1,\dots,x_{k+1}])\\
&=\omega([x_1,\dots,x_{k+1}])+\sum_{i=1}^{k+1}(-1)^i\omega([z,x_1,\dots,\hat{x_i},\dots,x_{k+1}])\\
&=f_\omega(x_1,\dots,x_{k+1})+\sum_{i=1}^{k+1}(-1)^if_\omega(z,x_1,\dots,\hat{x_i},\dots,x_{k+1})
\end{align*}
and thus 
$$
f_\omega(x_1,\dots,x_{k+1})=-\sum_{i=1}^{k+1}(-1)^if_\omega(-,x_1,\dots,\hat{x_i},\dots,x_{k+1}).
$$
We use this relation to conclude that
\begin{align*}
(c_\varphi\cup\omega)([x_1,\dots,x_{k+1}])=
&c_\varphi([x_0,x_1])\cdot\omega([x_1,\dots,x_{k+1}])\\
=&\int_{[x_0,x_1]}\varphi\cdot f_\omega(x_1,\dots,x_{k+1})\\
=&\int_{[x_0,x_1]}\varphi\cdot\left(-\sum_{i=1}^{k+1}(-1)^if_\omega(-,x_1,\dots,\hat{x_i},\dots,x_{k+1})\right)\\
=&-\sum_{i=1}^{k+1}(-1)^i\int_{[x_0,x_1]}\varphi\cdot f_\omega(-,x_1,\dots,\hat{x_i},\dots,x_{k+1})\\
=&\zeta(x_0,\dots,x_{k+1})-\sum_{i=2}^{k+1}(-1)^i\zeta(x_0,x_1,x_1,x_2,\dots,\hat{x_i},\dots,x_{k+1}).
\end{align*}
\end{proof}
We define $\eta\in\C^k(\widetilde{M})^\Gamma$ so that for every $s\in\S_k(\widetilde{M})$,
$$\eta(s)=\zeta(s(e_0),s(e_1),s(e_1),s(e_2),\dots,s(e_k)).$$
This cochain is $\Gamma$-invariant because the function $\zeta$ is.
As anticipated we will conclude by showing that $c_\varphi\cup\omega+\delta^k\eta$ is a bounded cochain. Since both $c_\varphi\cup\omega$ and $\delta^k\eta$ only depend on the vertices of simplices it will be enough to show that the function $(x_0,\dots,x_{k+1})\in\widetilde{M}^{k+2}\mapsto(c_\varphi\cup\omega+\delta^k\eta)([x_0,\dots,x_{k+1}])\in\mathbb{R}$ is bounded:
\begin{align*}
(c_\varphi\cup\omega+\delta^k\eta)([x_0,\dots,x_{k+1}])&=
\zeta(x_0,\dots,x_{k+1})\\
&-\sum_{i=2}^{k+1}(-1)^i\zeta(x_0,x_1,x_1,x_2,\dots,\hat{x_i},\dots,x_{k+1})\\
&+\sum_{i=0}^{k+1}(-1)^i\eta([x_0,\dots,\hat{x_i},\dots,x_{k+1}])\\
&=\zeta(x_0,x_1,x_2,\dots,x_{k+1})\\
&+\zeta(x_1,x_2,x_2,\dots,x_{k+1})\\
&-\zeta(x_0,x_2,x_2,\dots,x_{k+1}).
\end{align*}
Next we use Stoke's Theorem:
\begin{align*}
&\zeta(x_0,x_1,x_2,\dots,x_{k+1})\\
+&\zeta(x_1,x_2,x_2,\dots,x_{k+1})\\
-&\zeta(x_0,x_2,x_2,\dots,x_{k+1})\\
=&\int_{[x_0,x_1]}\varphi\cdot f_\omega(-,x_2,\dots,x_{k+1})\\
+&\int_{[x_1,x_2]}\varphi\cdot f_\omega(-,x_2,\dots,x_{k+1})\\
-&\int_{[x_0,x_2]}\varphi\cdot f_\omega(-,x_2,\dots,x_{k+1})\\
=&\int_{[x_0,x_1]\cup[x_1,x_2]\cup[x_2,x_0]}\varphi\cdot f_\omega(-,x_2,\dots,x_{k+1})\\
=&\int_{\partial[x_0,x_1,x_2]}\varphi\cdot f_\omega(-,x_2,\dots,x_{k+1})\\
=&\int_{[x_0,x_1,x_2]}d(\varphi\cdot f_\omega(-,x_2,\dots,x_{k+1})).
\end{align*}
The integration domain is a $2$-simplex with bounded area, this means that we only need to check that the norm of $d(\varphi\cdot f_\omega(-,x_2,\dots,x_{k+1}))\in\Omega^2(\widetilde{M})^\Gamma$ is bounded by a constant that does not depend on $(x_2,\dots,x_{k+1})$.
We expand
$$
d(\varphi\cdot f_\omega(-,x_2,\dots,x_{k+1}))=d\varphi\cdot f_\omega(-,x_2,\dots,x_{k+1})+
\varphi\wedge df_\omega(-,x_2,\dots,x_{k+1}).
$$
Both $\varphi$ and $d\varphi$ are $\Gamma$-invariant and since $\Gamma$ is cocompact $\|\varphi\|_\infty<\infty$ and $\|d\varphi\|_\infty<\infty$. The function $f_\omega(-,x_2,\dots,x_{k+1})$ is  bounded by $\|\omega\|_\infty$. Finally, as we saw in Lemma 3, $\|df_\omega(-,x_2,\dots,x_{k+1})\|_\infty$ is also bounded by a constant that does not depend on $(x_2,\dots,x_{k+1})$. This concludes the proof of our main Theorem.

\printbibliography
\end{document}